\theoremstyle{definition}
\newtheorem{lem}{Lemma}[section]
\newtheorem{cor}[lem]{Corollary}
\newtheorem{prop}[lem]{Proposition}
\newtheorem{thm}[lem]{Theorem}
\newtheorem{defn}[lem]{Definition}
\renewcommand{\geq}{\geqslant}
\renewcommand{\leq}{\leqslant}
\numberwithin{equation}{section}
\begin{document}
\author{Jim Coykendall}
\address{Department of Mathematical Sciences\\
	Clemson University\\
	Clemson, SC 29634}
\email[J.~Coykendall]{jcoyken@clemson.edu}
\keywords{Atomicity, factorization, polynomials}
\subjclass[2010]{Primary: 13F15, 13B25, 13P05}
\author{Stacy Trentham}
\address{Department of Science and Mathematics\\
	Northern State University\\
	Aberdeen, SD 57401}
\email[S.~Trentham]{stacy.trentham@northern.edu}

\title{Spontaneous Atomicity for Polynomial Rings with Zero-Divisors}
\begin{abstract}
In this paper, we show that it is possible for a commutative ring with identity to be non-atomic (that is, there exist non-zero nonunits that cannot be factored into irreducibles) and yet have a strongly atomic polynomial extension. In particular, we produce a commutative ring with identity, $R$, that is antimatter (that is, $R$ has no irreducibles whatsoever) such that $R[t]$ is strongly atomic. What is more, given any nonzero nonunit $f(t)\in R[t]$ then there is a factorization of $f(t)$ into irreducibles of length no more than $\text{deg}(f(t))+2$.
\end{abstract}

\maketitle

\section{Introduction and Background}

The last two and a half decades have seen a renaissance in the study of the theory of factorization. The main focus of this research has been in the theater of integral domains, but much work has also been done in the more general setting of commutative rings with identity; for example, the interested reader should consult the papers \cite{DDAZ},\cite{DDAZ2}, and \cite{DDAZ3}.

Even for factorization in integral domains, rather surprising effects can occur. For example, Roitman has produced an example of an atomic domain, $R$, whose polynomial extension $R[t]$ is not atomic (\cite{R1}). Of course, for domains it is the case that if $R[t]$ is atomic, then $R$ must be atomic, but even now, the subtle interplay of atomicity between a domain and its polynomial extension is not completely understood. Perhaps at least as suprising is Roitman's result showing that for the conditions ``$R$ is atomic" and ``$R[[x]]$ is atomic" neither one implies the other (\cite{R2}).

The intent of this note is to provide a companion to the Roitman papers \cite{R1} and \cite{R2}, and to provide a cautionary tale of the subtleties of factorization without the assumption of ``integral domain". We will provide an example of a non-atomic (in fact, with no irreducibles whatsoever) commutative ring with identity, $R$, whose polynomial extension $R[t]$ {\it is} atomic (and is, in fact, strongly atomic).

 We first recall the distinction between ``atom'' and ``strong atom'' in a ring with zero divisors (we note that we will be using the terminology ``(strong) irreducible'' and ``(strong) atom'' interchangably).

\begin{defn}
Let $R$ be a commutative ring with identity. We say that $a\in R$ is an atom if $a=bc$ implies that $a$ is associated to either $b$ or $c$ (in the sense that $(a)=(b)$ or $(a)=(c)$). We say that $a\in R$ is a strong atom if $a=bc$ implies that $a$ is strongly associated to either $b$ or $c$ (in the sense that either $b$ or $c$ is a unit in $R$).
\end{defn}

We say that a ring is (strongly) atomic if every nonzero nonunit is a product of (strong) atoms.

These two notions are distinct (see \cite{DDAZ} for example). But a simple example of the distinction occurs in the ring $\mathbb{Z}/6\mathbb{Z}$ where the element $\overline{3}$ is an atom, but not a strong atom (hence $\mathbb{Z}/6\mathbb{Z}$ is atomic, but not strongly atomic).

\section{Preliminaries and the Example}

We first outline the construction of the ring that we will consider throughout this paper.

Let $\mathbb{F}$ be a perfect field of characteristic $p$ and $\{x_1, x_2,\cdots, x_n,\cdots\}$ be a countable collection of indeterminates. We first define the domain $T$ as follows:

\[
T:=\mathbb{F}[x_1^{\alpha_1}, x_2^{\alpha_2},\cdots, x_n^{\alpha_n},\cdots]
\]

\noindent where the exponents $\alpha_i\in\mathbb{Q}^+\bigcup\{0\}$ range over the non-negative rationals for all $i\geq 1$.
 
We now define the ideal 

\[
I:=\langle\{\prod_{i=1}^\infty x_i^{\beta_i}\}\rangle
\]

\noindent where $\beta_i=0$ for all but finitely many $i$, and $\sum_{i=1}^\infty\beta_i>1$ (essentially $I$ is the ideal generated by monomials of total degree greater than 1).

The ring of our focus will be the ring

\[
R:=T/I.
\]

We record some results concerning the properties of the ring $R$ for later use. We first remark that a typical element (coset) of $T$ can be represented in the form

\[
\epsilon_0+\epsilon_1\overline{X}_1+\cdots +\epsilon_n\overline{X}_n+I
\]

\noindent where each $\epsilon_i\in\mathbb{F}$ and each $\overline{X}_i$ is a monomial from $R$ of the form $\overline{X}_i=x_{i,1}^{a_{i,1}}x_{i,2}^{a_{i,2}}\cdots x_{i,t_i}^{a_{i,t_i}}$. Additionally if $\overline{X}_i=x_{i,1}^{a_{i,1}}x_{i,2}^{a_{i,2}}\cdots x_{i,t_i}^{a_{i,t_i}}$, we say that $\overline{X}_i$ is {\it composed} of the elements $\{x_{i,1}, x_{i,2},\cdots , x_{i, t_i}\}$, and has {\it potential} $\sum_{j=1}^{t_i} a_{i,j}$, and we will write $\text{pot}(\overline{X}_i)=\sum_{j=1}^{t_i} a_{j,t_j}$. If we want to specify a single $x_{i,j}$ we write $\text{pot}_{x_{i,j}}(\overline{X}_i)=a_{i,j}$.

Also in the sequel, we will abuse the notation and represent elements of $R$ as elements of $T$ and suppress the coset notation.

\begin{lem}\label{0dim}
$R$ is $0-$dimensional and quasi-local. In particular, every element of $R$ is either nilpotent or a unit.
\end{lem}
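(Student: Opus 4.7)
The plan is to show directly that every element of $R$ is either a unit or nilpotent; from this both the quasi-local property and zero-dimensionality follow at once, since the set of non-units then coincides with the nilradical, hence is the unique prime (and maximal) ideal.

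First I would fix the normal form for elements of $R$: any $r\in R$ can be written as $r=\epsilon_0+\sum_{i=1}^N \epsilon_i \overline{X}_i$, where $\epsilon_0,\epsilon_i\in\mathbb{F}$ and each $\overline{X}_i$ is a monomial with $\operatorname{pot}(\overline{X}_i)\in(0,1]$ (any monomial of potential strictly greater than $1$ lies in $I$, hence vanishes in $R$). Using this form, I would handle the two cases based on whether $\epsilon_0=0$.

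Case 1: $\epsilon_0=0$. Set $v=\sum_{i=1}^N \epsilon_i \overline{X}_i$ and $\delta=\min_{1\le i\le N}\operatorname{pot}(\overline{X}_i)>0$. Expanding $v^n$ gives a finite sum of scalar multiples of products $\overline{X}_{i_1}\cdots \overline{X}_{i_n}$, and each such product is a monomial of potential at least $n\delta$. Choosing $n$ with $n\delta>1$ forces every term of $v^n$ into $I$, so $v^n=0$ in $R$; hence $r=v$ is nilpotent. Case 2: $\epsilon_0\ne 0$. Writing $r=\epsilon_0(1+w)$ with $w=\epsilon_0^{-1}\sum_{i=1}^N \epsilon_i \overline{X}_i$, the argument of Case~1 shows $w$ is nilpotent, so $1+w$ is a unit (with inverse the finite geometric series $\sum_{k\ge 0}(-w)^k$), and hence so is $r$.

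Finally, the set $\mathfrak{m}$ of elements with $\epsilon_0=0$ is the complement of the unit group and coincides with the nilradical of $R$, so it is an ideal and is the unique maximal ideal; this gives quasi-local. Since every prime ideal contains the nilradical $\mathfrak{m}$ while $\mathfrak{m}$ is itself maximal, $\mathfrak{m}$ is the only prime, and $\dim R=0$. The only real subtlety is the finiteness of the support of $v$ in Case~1, which is what guarantees that $\delta>0$ and lets us pick a single $n$ killing every term of $v^n$; this uses that in the normal form the sum is finite, a consequence of working in the polynomial ring $T$ before quotienting.
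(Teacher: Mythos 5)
Your proof is correct. The only substantive difference from the paper's argument is in how you kill the powers of an element with zero constant term: the paper invokes the characteristic-$p$ Frobenius identity $(\sum_i \epsilon_i\overline{X}_i)^{p^N}=\sum_i\epsilon_i^{p^N}\overline{X}_i^{p^N}$ to reduce to single monomials and then observes that each $\overline{X}_i^{p^N}$ has potential $p^N d_i>1$, whereas you expand $v^n$ directly and note that \emph{every} term, cross terms included, is a monomial of potential at least $n\delta$ where $\delta>0$ is the minimum potential, so choosing $n\delta>1$ annihilates everything. Your version is marginally more general -- it makes no use of the characteristic $p$ or the perfectness of $\mathbb{F}$, which are genuinely needed elsewhere in the paper (for the $p$-th roots in Proposition~\ref{nonatomic}) but not here -- and it makes explicit the point, which the paper leaves implicit, that $\delta>0$ because the sum in the normal form is finite. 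Your completion of the argument (units via the geometric series when $\epsilon_0\neq 0$, and the identification of the non-units with the nilradical to get both quasi-locality and $\dim R=0$) is exactly what the paper compresses into its final sentence.
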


\begin{proof}
Using the notation from above, we let $d_i=\sum_{j=1}^{t_i}a_{i, j}$ be the potential of the monomial $\overline{X}_i$. If $m=\min_{1\leq i\leq n}(d_i)$ then there is an $N\in\mathbb{N}$ such that $p^Nm> 1$ and hence $p^Nd_i> 1$ for all $1\leq i\leq n$.

Note first that if $\epsilon_0=0$ then because the characteristic of $R$ is $p$, we have that

\[
(\epsilon_1\overline{X}_1+\cdots +\epsilon_n\overline{X}_n)^{p^N}=\epsilon_1^{p^N}\overline{X}_1^{p^N}+\cdots +\epsilon_n^{p^N}\overline{X}_n^{p^N}=0.
\]

\noindent Hence $\epsilon_1\overline{X}_1+\cdots +\epsilon_n\overline{X}_n$ is nilpotent. This computation shows that every nonunit is nilpotent and the statements of the lemma follow.
\end{proof}

\begin{prop}\label{nonatomic}
$R$ has no irreducible elements. In particular, $R$ is non-atomic.
\end{prop}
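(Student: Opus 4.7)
By Lemma \ref{0dim}, any nonzero nonunit $r\in R$ is nilpotent, so the plan is to show that an arbitrary such $r$ admits a nontrivial factorization $r=bc$ in which neither $b$ nor $c$ is associated to $r$. Expand $r$ in the $\mathbb{F}$-basis of $R$ consisting of monomials of potential at most $1$: write
\[
r=\epsilon_0+\sum_{i=1}^{n}\epsilon_i\overline{X}_i,
\]
with $\epsilon_i\in\mathbb{F}$ and each $\overline{X}_i$ a nonzero monomial in $R$. Since $r$ is a nonunit and $R$ is quasi-local with maximal ideal equal to the nilradical, the constant term $\epsilon_0$ must vanish (otherwise $r$ would be a unit plus a nilpotent, hence a unit), so every $\overline{X}_i$ has positive potential.

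The key construction uses the two features of $R$: $\mathbb{F}$ is perfect of characteristic $p$, and the exponents in $T$ range over all non-negative rationals. Define
\[
s:=\sum_{i=1}^{n}\epsilon_i^{1/p}\,\overline{X}_i^{1/p},
\]
where $\epsilon_i^{1/p}\in\mathbb{F}$ by perfection and $\overline{X}_i^{1/p}$ is a legitimate monomial in $T$ by the exponent condition. Since $\mathrm{pot}(\overline{X}_i^{1/p})=\mathrm{pot}(\overline{X}_i)/p\le 1/p<1$, none of these monomials lies in $I$, so $s$ is a nonzero element of $R$ with no constant term, hence nilpotent. The Frobenius identity in characteristic $p$ (all mixed multinomial coefficients are divisible by $p$) gives $s^p=\sum_i\epsilon_i\overline{X}_i=r$, so we obtain the factorization
\[
r\;=\;s\cdot s^{p-1}.
\]
Both factors are nilpotent, hence nonunits, and $s^{p-1}\ne 0$ since $s^{p-1}=0$ would force $r=s\cdot s^{p-1}=0$, contrary to hypothesis.

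It remains to check that neither factor is associated to $r$. Suppose $(s)=(r)=(s^p)$; then $s=s^p h$ for some $h\in R$, whence $s(1-s^{p-1}h)=0$. Because $s^{p-1}h$ is nilpotent, $1-s^{p-1}h$ is a unit (this is exactly where the quasi-local structure from Lemma \ref{0dim} is used), forcing $s=0$, a contradiction. An identical argument starting from $s^{p-1}=s^p k$ yields $s^{p-1}(1-sk)=0$ and hence $s^{p-1}=0$, again a contradiction. Therefore $r$ is not an atom. Since $r$ was an arbitrary nonzero nonunit, $R$ contains no irreducibles and is in particular non-atomic.

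The main obstacle is really just the bookkeeping in the last step: one must resist the temptation to check the failed association by a direct degree or potential comparison (which is awkward because the $p$-th root monomials have potentials less than those of the original $\overline{X}_i$) and instead exploit the quasi-local structure via the ``$1+\text{nilpotent}$ is a unit'' trick.
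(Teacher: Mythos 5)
Your proof is correct and follows the paper's own argument: both extract a $p$-th root $s$ of $r$ using the perfectness of $\mathbb{F}$ and the divisibility of the exponent monoid, and factor $r=s\cdot s^{p-1}$. The only difference is that you carefully verify the non-association of both factors via the ``$1+\text{nilpotent}$ is a unit'' argument, a detail the paper asserts without proof; this is a welcome tightening, not a different approach.
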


\begin{proof}
If 

\[
\overline{X}=x_1^{a_1}x_2^{a_2}\cdots x_k^{a_k}
\]

\noindent then $\overline{X}^{\frac{1}{p}}=x_1^{\frac{a_1}{p}}x_2^{\frac{a_2}{p}}\cdots x_k^{\frac{a_k}{p}}\in R$.

Since any monomial has a nontrivial (nonassociate) $p^{\text{th}}$ root in $R$, an arbitrary nonzero, nonunit $\epsilon_1\overline{X}_1+\cdots +\epsilon_n\overline{X}_n$ has the nonassociate $p^{\text{th}}$ root $\epsilon_1^{\frac{1}{p}}\overline{X}_1^{\frac{1}{p}}+\cdots +\epsilon_n^{\frac{1}{p}}\overline{X}_n^{\frac{1}{p}}$ (since $\mathbb{F}$ is a perfect field). Hence $R$ contains no irreducibles and is therefore non-atomic.
\end{proof}

The following lemma is straightforward, but will be useful later. The content basically asserts that multiplying the two lowest potential terms (of highest degree) yields a nonzero term in the product of two polynomials (assuming, of course, that the product is not identically $0$).

\begin{lem}\label{survive}
Let $f(t)=\sum_{i=0}^nf_it^i, g(t)=\sum_{i=0}^mg_it^i\in R[t], (f_i,g_i\in R)$ be such that $f(t)g(t)\neq 0$. If $f_j$ contains a monomial that minimizes potential among all monomials in $f(t)$ (and $j$ is maximized in the case that there are multiple monomials of minimal potential) and $g_{j^{\prime}}$ is the analog term for $g(t)$, then the coefficient of $t^{j+j^{\prime}}$ has a (surviving) monomial that is the sum of these minimal potentials.
\end{lem}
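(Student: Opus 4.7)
My plan is to work in the natural grading of $T$ by potential and track the minimum-potential monomials through the product. Set $\mu_f := \min\{\text{pot}(\overline{X}) : \overline{X} \text{ a monomial in } f(t)\}$ and define $\mu_g$ analogously; fix specific witnesses $\overline{X}$ (in $f_j$) and $\overline{Y}$ (in $g_{j'}$) of these minima. The goal is to produce a nonzero monomial of potential $\mu_f + \mu_g$ in the coefficient of $t^{j+j'}$ of $f(t)g(t)$, under the tacit hypothesis that $\mu_f + \mu_g \leq 1$ (so that a monomial of this potential is not automatically killed by $I$).

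The first key step is to isolate where a monomial of potential $\mu_f + \mu_g$ can arise in the convolution $\sum_{k+k'=j+j'} f_k g_{k'}$. Any monomial in $f_k g_{k'}$ factors as $\overline{X}' \overline{Y}'$ with $\overline{X}' \in f_k$ and $\overline{Y}' \in g_{k'}$, and has potential $\text{pot}(\overline{X}') + \text{pot}(\overline{Y}') \geq \mu_f + \mu_g$. Equality forces each factor to realize its own minimum, and by the maximality stipulation on $j$ this restricts to $k \leq j$, with $k' \leq j'$ symmetrically; combined with $k + k' = j + j'$ this forces $(k,k') = (j,j')$. Thus the only possible source of surviving monomials of the target potential is the single product $f_j g_{j'}$.

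The second step is to verify that the potential-$(\mu_f + \mu_g)$ component of $f_j g_{j'}$ is actually nonzero in $R$. Writing $f_j^{\min}$ for the sum, with $\mathbb{F}$-coefficients, of the monomials of $f_j$ of potential $\mu_f$ (and $g_{j'}^{\min}$ analogously), this component is precisely $f_j^{\min} \cdot g_{j'}^{\min}$. Both factors are nonzero by construction, and because $T$ is a polynomial domain their product is a nonzero polynomial in $T$. Every monomial of the product has potential $\mu_f + \mu_g \leq 1$, so none lies in $I$, and the image in $R$ is a nonzero sum of monomials of the required potential, yielding the surviving monomial.

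The main obstacle is the ``no other source'' step: without the maximality of $j$ and $j'$, minimum-potential monomials appearing at higher indices could conspire to contribute a competing potential-$(\mu_f + \mu_g)$ summand that might cancel the intended survivor. A secondary delicacy is the condition $\mu_f + \mu_g \leq 1$, needed to prevent $\overline{X}\,\overline{Y}$ being absorbed into $I$; this appears to be an implicit standing assumption in the intended applications of the lemma.
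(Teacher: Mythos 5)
Your proof is correct, and the decisive step is handled by a genuinely different mechanism than the paper's. Both arguments agree on the first reduction: maximality of $j$ and $j'$ plus $k+k'=j+j'$ forces all potential-$(\mu_f+\mu_g)$ contributions to come from the single product $f_jg_{j'}$ (the paper states this more tersely). But for the crucial non-cancellation step, the paper selects specific extremal witnesses $z_1,z_2$ by a lexicographic maximization over the exponents of $x_1,x_2,\dots$ and shows the monomial $z_1z_2$ can arise in only one way; you instead observe that the potential-$(\mu_f+\mu_g)$ homogeneous component of $f_jg_{j'}$ is exactly $f_j^{\min}\cdot g_{j'}^{\min}$, a product of two nonzero elements of the domain $T$, hence nonzero, and of potential at most $1$ so untouched by $I$. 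Your route is shorter and avoids the somewhat delicate extremal-selection bookkeeping; the paper's route is more constructive in that it exhibits a specific surviving monomial, which is the style of argument reused in Proposition~\ref{two}. One small improvement you could make: the condition $\mu_f+\mu_g\leq 1$, which you flag as a ``tacit hypothesis,'' is actually a consequence of the stated hypothesis $f(t)g(t)\neq 0$ --- if $\mu_f+\mu_g>1$ then every monomial of every $f_kg_{k'}$ has potential exceeding $1$ and lies in $I$, so the product would vanish identically --- so no extra assumption is needed.
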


\begin{proof}
Suppose that $f_j$ contains a monomial of minimal potential in $f(t)$ (and in the case of multiple minimums, we assume that $j$ is maximal) and $g_{j^{\prime}}$ is the analog for $g(t)$. We will call these monomials (reordering if necessary), $z_1:=x_1^{a_1}x_2^{a_2}\cdots x_k^{a_k}$ and $z_2:=x_1^{b_1}x_2^{b_2}\cdots x_k^{b_k}$ respectively. Here each $a_i, b_i\geq 0$ and $a_i+b_i>0$ for all $1\leq i\leq k$. Since the potential of every term in $f(t)$ (resp. $g(t)$) at degree greater than $j$ (resp. $j^{\prime}$) strictly exceeds $\text{pot}(z_1)$ (resp. $\text{pot}(z_2)$), it remains only to show that there is a a monomial of $\text{pot}(z_1)$ and one of $\text{pot}(z_2)$ whose product cannot be cancelled by the product of two other monomials. 

To, this end we reselect $z_1$ and $z_2$ as follows. Among all monomials of minimal potential, select to maximize $a_1$ (resp. $b_1$). If there are multiple solutions in either case select from among these to maximize $a_2$ (resp. $b_2$). The process terminates for either monomial if a unique maximum is found, and in any case it will terminate for both by the $k^{\text{th}}$ step.

We now observe that if we can find two other monomials of $f_j$ and $g_{j^{\prime}}$ respectively, say $w_1, w_2$ such that $\text{pot}(w_i)=\text{pot}(z_i)$ for $i=1,2$ and $w_1w_2=z_1z_2$ then given our selection of of $z_1$ and $z_2$, we can see that $\text{pot}_{x_i}(w_1)=\text{pot}_{x_i}(z_1), 1\leq i\leq k$. Hence $w_1=z_1$ and $w_2=z_2$, and this establishes the lemma.

\end{proof}

For simplicity, we consider a two-variable analog of the ring we constructed earlier.

\begin{prop}\label{two}
Let $A:=K[x^{\alpha},y^{\beta}]$ where $\alpha, \beta$ range over the non-negative rationals. If $I$ is the ideal generated by all monomials of degree strictly greater than 1, then in the ring $(A/I)[t]$, the polynomial $x+yt$ (abusing the notation) is strongly irreducible.
\end{prop}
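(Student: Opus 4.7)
The plan is to argue by contradiction: suppose $x + yt = f(t)g(t)$ in $(A/I)[t]$ with neither $f$ nor $g$ a unit, and write $S := A/I$. The strategy is to exploit the $\mathbb{Q}_{\geq 0}$-grading of $S$ by potential --- the potential-$\sigma$ component $S^{(\sigma)}$ is the $K$-span of $\{x^\theta y^{\sigma-\theta} : \theta \in [0,\sigma]\cap\mathbb{Q}\}$, and $S^{(\sigma)} = 0$ for $\sigma > 1$ --- together with the fact that $x + yt$ is homogeneous of potential $1$ in $S[t]$. For $h \in S[t]$ nonzero, let $V(h)$ denote the minimum potential appearing in any monomial of any coefficient of $h$. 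The two-variable analog of Lemma \ref{survive} (whose proof transfers verbatim) yields $V(fg) = V(f) + V(g)$, so $V(f) + V(g) = V(x + yt) = 1$; write $p := V(f)$ and $q := V(g) = 1 - p$.

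Decomposing $f = f^{(p)} + f^{(>p)}$ and $g = g^{(q)} + g^{(>q)}$ by potential, any mixed product involving $f^{(>p)}$ or $g^{(>q)}$ consists of monomials of total potential strictly greater than $1$ and therefore vanishes in $S$. Hence $f^{(p)} g^{(q)} = x + yt$ with $f^{(p)} \in S^{(p)}[t]$ and $g^{(q)} \in S^{(q)}[t]$. I then transfer this identity into an honest polynomial ring over a domain via the $K$-linear maps $\phi_\sigma : S^{(\sigma)} \to K[u^\alpha : \alpha \in \mathbb{Q}_{\geq 0}]$ sending $x^\theta y^{\sigma-\theta} \mapsto u^\theta$; the image is the $K$-span of $\{u^\theta : \theta \in [0, \sigma]\}$, and a basis-level check gives $\phi_{\sigma+\tau}(ab) = \phi_\sigma(a)\phi_\tau(b)$ whenever $\sigma + \tau \leq 1$. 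Extending coefficient-wise to polynomials in $t$, the identity becomes $\tilde f \, \tilde g = u + t$ in $K[u^\alpha][t]$, where $\tilde f := \phi_p(f^{(p)})$ and $\tilde g := \phi_q(g^{(q)})$.

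Since $K[u^\alpha : \alpha \in \mathbb{Q}_{\geq 0}]$ is the monoid algebra of the cancellative torsion-free monoid $(\mathbb{Q}_{\geq 0}, +)$ --- a domain with unit group $K^*$ --- the ring $K[u^\alpha][t]$ is a polynomial ring over a domain. The polynomial $u + t$ is linear in $t$, so in any factorization one factor has $t$-degree $0$ and the other $t$-degree $1$, and comparing coefficients shows the $t$-degree-zero factor must be a unit $c \in K^*$. Supposing $\tilde f = c$, back-translation gives $\phi_q(g_0^{(q)}) = c^{-1} u$, which forces $u^1$ to lie in the image of $\phi_q$; this requires $q \geq 1$, and combined with $p + q = 1$ yields $p = 0$, $q = 1$. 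With $p = 0$ the relation $\tilde f = c$ unpacks to $f_0^{(0)} = c \in K^*$ and $f_i^{(0)} = 0$ for $i \geq 1$, so $f_0$ is a unit in $S$ and every higher $f_i$ is nilpotent --- making $f$ a unit in $S[t]$, a contradiction. The case $\tilde g = c$ is symmetric and makes $g$ a unit. The main technical step is the translation to $K[u^\alpha][t]$; once that is in place, the domain structure instantly controls the factorization.
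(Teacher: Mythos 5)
Your proof is correct, and it takes a genuinely different route from the paper's. The paper lifts the factorization to the monoid domain $K[x,y;\mathbb{Q}^+]$, writes $x+yt=fg+h$ with $h\in IR[t]$, and uses a deglex order to prune $f$ and $g$ down to their minimal-degree parts; the contradiction is then extracted by an extremal-monomial selection (choose a minimal monomial $z$ of $f_0$ maximizing $\deg_y$ and show $zy^b$ survives). You instead exploit the $\mathbb{Q}_{\geq 0}$-grading of $A/I$ by potential to reach the same homogeneous identity $f^{(p)}g^{(q)}=x+yt$ with $p+q=1$ --- and note that you could get $V(fg)=V(f)+V(g)$ for free from the grading rather than citing a two-variable analog of Lemma~\ref{survive}: the bottom graded component of $fg$ is $f^{(p)}g^{(q)}$ computed in the domain $A$, hence nonzero, and it survives in $A/I$ exactly when $p+q\leq 1$, while homogeneity of $x+yt$ pins $p+q$ to $1$ --- and then you apply the specialization $x\mapsto u$, $y\mapsto 1$ to land in the one-variable monoid domain $K[u^{\alpha}][t]$. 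There the identity $\tilde{f}\tilde{g}=u+t$, with unit leading $t$-coefficient over a domain, instantly forces the $t$-degree-zero factor to be a scalar, and your back-translation ($u^{1}$ in the image of $\phi_q$ forces $q\geq 1$, hence $p=0$, hence $f$ is a unit plus nilpotents and so a unit of $S[t]$) is complete; the nilpotence of positive-potential elements holds over any field $K$ since a high power of such an element has every term of potential exceeding $1$. Your specialization trick replaces the paper's monomial bookkeeping with the rigid factorization theory of a polynomial ring over a domain, which is arguably more transparent; the paper's method has the advantage of rehearsing exactly the technique (Lemma~\ref{survive} plus surviving-monomial arguments) that is reused in the proof of the main theorem.
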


\begin{proof}
Let $K$ be a field and let $K[x,y; M]$ be the monoid domain with the indeterminates $x$ and $y$. In $R:=K[x,y;\mathbb{Q}^+]$ we impose the deglex order (see, for example, \cite{adams}) as follows. If $a,b,c,d\in\mathbb{Q}$ are positive we declare that $x^ay^b\prec x^cy^d$ if $a+b<c+d$. If $a+b=c+d$  we again say that $x^ay^b\prec x^cy^d$ if $a<b$ or $c<d$ in the case that $a=b$. So this totally orders the subset of nonzero monomials.

To simplify the argument, we argue from the point of domains as follows. We now suppose that $x+yt=fg +h$ where $f,g\in R[t]$ are two nonunits $\text{mod}(IR[t])$ where is $I$ is the ideal of $R$ generated by all monomials of total degree greater than 1 and $h\in IR[t]$. 

We denote by $\text{min}(f)$ to be the monomial(s) of least degree in $f$. Since we have that $x+yt=fg+h$, it must be the case that $1=\text{deg}(\text{min}(fg))=\text{deg}(\text{min}(f))+\text{deg}(\text{min}(g))$. Letting $a=\text{deg}(\text{min}(f))$ and  $b=\text{deg}(\text{min}(g))$, we take $u,v$ to be two monomials occuring in $f,g$ respectively such that $uv\neq 0\text{mod}( IR[t])$. Note that $1\geq \text{deg}(u)+\text{deg}(v)\geq a+b=1$, forces $\text{deg}(u)=a$ and $\text{deg}(v)=b$.

We now throw out all monomials of $f$ of degree larger than $a$ and all monomials of $g$ with degree larger than $b$ and observe this means that $\text{deg}(u)=a$ for all monomials occuring in $f$ (respectively $\text{deg}(v)=b$ for all monomials occuring in $g$). From this, we conclude that $x+yt=fg$ (hence this factorization is analogous to a factorization in an integral domain). Without loss of generality, we can assume that $g\in R$ and so if we write $f=f_0+f_1t$ then $x=\text{min}(f_0)\text{min}(g)$ and $y=\text{min}(f_1)\text{min}(g)$. Hence $g$ has minimal monomials of the form $x^b$ and $y^b$. We first assume that $0<a,b< 1$. So we consider a (minimal) monomial of $f_0$ (say $z$) that maximizes $\text{deg}_y(z)$ and write $z=x^\alpha y^\beta$ with $\alpha+\beta=a$ and note that the monomial $zy^b=x^\alpha y^{\beta+b}$ must survive in the product and this is our contradiction. Hence either $a=0$ or $b=0$. If $a=0$ then each coefficient of $f$ is a unit in which case the previous argument demonstrates that the degree $0$ term of $fg$ cannot be (just) $x$. Hence we conclude that $b=0$ and hence $g$ is a unit. So we see that $x+yt$ is a strong atom.
\end{proof}

\begin{prop}\label{unit}
Any element of $f(t)\in R[t]$ that has at least one unit coefficient is either a unit or has a factorization into no more than $n$ strong atoms where $n$ is the highest degree term of $f(t)$ that has a unit coefficient.
\end{prop}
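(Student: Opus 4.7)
My plan is to proceed by strong induction on $n$, the highest degree at which $f(t)$ carries a unit coefficient. For the base case $n = 0$, $f_0$ is a unit and every $f_i$ with $i \geq 1$ is nilpotent by Lemma~\ref{0dim}; since a polynomial whose coefficients all lie in the nilradical of $R$ is itself nilpotent in $R[t]$, the tail $\sum_{i \geq 1} f_i t^i$ is nilpotent, so $f(t)$ is a unit plus a nilpotent, hence a unit (zero strong atoms suffice).

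For the inductive step $n \geq 1$, either $f$ is already a strong atom (in which case a single factor suffices and $1 \leq n$), or we may write $f = a \cdot b$ with both $a, b$ nonunits in $R[t]$. Letting $n_a, n_b$ denote the highest unit-coefficient degrees of $a$ and $b$, the plan is to establish $n_a + n_b = n$ and $n_a, n_b \geq 1$. Strong induction then provides factorizations of $a$ and $b$ into at most $n_a$ and $n_b$ strong atoms respectively, so $f$ factors into at most $n_a + n_b = n$ strong atoms.

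To prove the key numerical identity I will pass to the residue field. By Lemma~\ref{0dim} the maximal ideal $\m$ of $R$ equals its nilradical, so $R/\m \cong \mathbb{F}$ and an element $r \in R$ is a unit iff its image $\bar r \in \mathbb{F}$ is nonzero. Under the induced surjection $R[t] \onto \mathbb{F}[t]$, for any $p(t) \in R[t]$ the degree of $\bar p \in \mathbb{F}[t]$ equals the highest degree at which $p$ has a unit coefficient (with $\bar p = 0$ iff $p$ is nilpotent in $R[t]$). Moreover $p$ is a unit in $R[t]$ precisely when $\deg \bar p = 0$, since such a $p$ has unit constant term and nilpotent higher coefficients, hence is a unit plus a nilpotent polynomial. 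Applying this to $\bar f = \bar a \bar b$: the hypothesis $\deg \bar f = n \geq 1$ forces both $\bar a, \bar b$ nonzero, giving $n_a + n_b = \deg \bar a + \deg \bar b = n$; and the nonunit condition on $a,b$ forces $\deg \bar a, \deg \bar b \geq 1$, placing $n_a, n_b$ strictly below $n$ and making the induction hypothesis available.

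I do not foresee a significant obstacle. The only technical point is the characterization of units in $R[t]$ as polynomials with unit constant term and nilpotent higher coefficients, which is immediate from Lemma~\ref{0dim}; everything else reduces to the degree calculus for $\mathbb{F}[t]$.
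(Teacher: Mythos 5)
Your proof is correct and rests on the same mechanism as the paper's: reduction modulo $\mathfrak{M}[t]$ to $\mathbb{F}[t]$, the identification of the highest unit-coefficient degree with $\deg\overline{f}$, and the characterization of units in $R[t]$ as polynomials with unit constant term and nilpotent tail. The paper phrases this as a bound on the length of an arbitrary factorization into nonunits followed by a termination argument, whereas you package it as strong induction on $n$; the latter is a somewhat tidier way of saying the same thing.
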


\begin{proof}
Let $\mathfrak{M}$ be the maximal ideal of $R$ generated by all the monomials $x_i$ and consider the image of $f(t)$ in the domain $R[t]/\mathfrak{M}[t]\cong\mathbb{F}[t]$, which we will denote by $\overline{f}(t)$. Any factorization of $f(t)\in R[t]$ must have the property that each factor must have at least one unit coefficient. Hence given any decomposition

\[
f(t)=f_1(t)f_2(t)\cdots f_m(t)
\]

\noindent there is a corresponding factorization in $\mathbb{F}[t]$

\[
\overline{f}(t)=\overline{f}_1(t)\overline{f}_2(t)\cdots \overline{f}_m(t).
\]

Note that if $\text{deg}(\overline{f}_i(t))=0$, then $f_i(t)$ is a unit in $R[t]$ and so we will discount this possibility and assume that each $\text{deg}(\overline{f}_i(t))\geq 1$. Since $\mathbb{F}[t]$ is a UFD, this puts an upper bound (namely $\text{deg}(\overline{f}(t))$) on the length of the second decomposition. Since each factor of $f(t)\in R[t]$ must have a (positive degree) unit coefficient, we see that factoring $f(t)$ must terminate after no more than $n$ steps where $n$ is the largest degree term of $f(t)$ that has a unit coefficient. Note this argument also demonstrates that each $f_i(t)$ must be strongly irreducible. Indeed if $f_i(t)=g(t)h(t)$ then $\overline{f}_i(t)=\overline{g}(t)\overline{h}(t)$. Then one of these factors (say $\overline{g}(t)$) is a unit and hence its degree is $0$. So $g(t)$ is a unit (constant term) plus a sum of nilpotent elements (higher degree terms) and hence is a unit in $R[t]$. This establishes the proposition.
\end{proof}

\begin{thm}
The ring $R$ is a non-atomic ring such that $R[t]$ is strongly atomic. What is more, given $f(t)\in R[t]$, a nonzero, nonunit polynomial, one of the following occurs.
\begin{enumerate}
\item If $f(t)$ has a unit coefficient, then $f(t)$ can be written as a product of no more than $n$ strong atoms where $n$ is the highest degree term of $f(t)$ that has a unit coefficient.
\item If $f(t)\in\mathfrak{M}[t]$ has a factorization $f(t)=g(t)h(t)$ with both $g(t), h(t)\in\mathfrak{M}[t]$ then $f(t)$ can be factored into two strong atoms.
\item If  $f(t)\in\mathfrak{M}[t]$ does not have a factorization $f(t)=g(t)h(t)$ with both $g(t), h(t)\in\mathfrak{M}[t]$ then $f(t)$ has a factorization of length no more than $\text{deg}(f(t))+2$ strong atoms.
\end{enumerate}
\end{thm}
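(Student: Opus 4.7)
The plan is to handle each of the three cases in turn, with case~(1) being a direct invocation of Proposition~\ref{unit}.

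For case~(3), I plan to iteratively peel off factors with a unit coefficient from $f(t)$ while keeping one factor in $\mathfrak{M}[t]$. Writing $f=g_1h_1$ with both factors nonunits, one must lie in $\mathfrak{M}[t]$ after passing to the domain $\mathbb{F}[t]=R[t]/\mathfrak{M}[t]$, say $g_1\in\mathfrak{M}[t]$, and the case-(3) hypothesis then forces $h_1$ to have a unit coefficient (otherwise $f=g_1h_1$ with both in $\mathfrak{M}[t]$ would be a case-(2) factorization). The case-(3) property is inherited by the $\mathfrak{M}[t]$-factor: if $g_1=g_1'g_1''$ with both in $\mathfrak{M}[t]$, then $f=g_1'(g_1''h_1)$ is a case-(2) factorization of $f$ since $g_1''h_1\in\mathfrak{M}[t]$, contradicting the hypothesis. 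Iterating yields a chain $g_{i-1}=g_ih_i$ with each $g_i\in\mathfrak{M}[t]$ and each $h_i$ a nonunit having a unit coefficient. Since such an $h_i$ must have a unit coefficient at some positive degree (otherwise $h_i$ is itself a unit in $R[t]$), the measure $d^{\min}(h_i)$, defined as the maximum degree at which a minimum-potential monomial appears, is at least $1$. Lemma~\ref{survive} then gives $d^{\min}(g_{i-1})\geq d^{\min}(g_i)+d^{\min}(h_i)$, so $d^{\min}(g_i)\leq d^{\min}(g_{i-1})-1$ strictly decreases. After at most $d^{\min}(f)$ steps the chain terminates with some $g_k$ a strong atom, giving $f=g_kH$ where $H=h_k\cdots h_1$ has a unit coefficient (its image in $\mathbb{F}[t]$ is a nonzero product). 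Applying Proposition~\ref{unit} to $H$ then produces at most $u(H)$ strong-atom factors, where $u(H)$ is the highest degree of a unit coefficient of $H$; a second invocation of Lemma~\ref{survive} on $f=g_kH$ yields $u(H)=d^{\min}(H)\leq d^{\min}(f)\leq\deg f$, so the total length is at most $1+\deg f\leq\deg f+2$.

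For case~(2), let $f=gh$ with $g,h\in\mathfrak{M}[t]$. The analogous replacement $(g,h)\mapsto(g',g''h)$ whenever $g=g'g''$ nontrivially with $g'\in\mathfrak{M}[t]$ preserves the case-(2) structure, since $g''h\in\mathfrak{M}[t]$ in every sub-case. The main obstacle here is termination: if $g''\in\mathfrak{M}[t]$ as well, Lemma~\ref{survive} only forces the minimum potential of $g'$ to be strictly smaller than that of $g$, and since minimum potentials take values in the densely ordered set $\mathbb{Q}^+$, the naive descent need not terminate. I would therefore replace the termination argument with a direct construction: since Lemma~\ref{survive} identifies the minimum-potential monomial of $f$ as the product $m_gm_h$ of the minimum-potential monomials of $g$ and $h$, one can attempt to decorate these monomials with additional terms of sufficiently high potential so that (i) each decorated polynomial is a strong atom, by an argument akin to Proposition~\ref{two}, and (ii) the cross terms in the decorated product vanish in $R$ because their total potentials exceed~$1$. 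Explicit constructions such as $x^{1/2}=(x^a+yt)(x^{1/2-a}+yt)$ and $x^{3/4}=(x^{1/2}+yt)(x^{1/4}+y^{3/4}t)$ illustrate this decoration, and generalizing them to an arbitrary case-(2) element $f$ is what I expect to be the heart of the proof.
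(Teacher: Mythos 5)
Your case (1) is exactly the paper's. Your case (3) is a genuinely different but valid route: the paper instead fixes one factorization $f=gh$ with the unit coefficient of $g$ at the largest possible degree, bounds that degree by $\deg f$ via Lemma~\ref{survive}, factors $g$ by Proposition~\ref{unit}, and then observes that this maximal choice forces $h$ into case (2), yielding the bound $\deg f +2$. Your descent on $d^{\min}$ (which is legitimate: every monomial of a product has potential at least the sum of the minimal potentials of the factors, so the survivor produced by Lemma~\ref{survive} at degree $j+j'$ is a minimal-potential monomial of the product, giving $d^{\min}(g_{i-1})\geq d^{\min}(g_i)+d^{\min}(h_i)$ with $d^{\min}(h_i)\geq 1$) terminates with a strong atom $g_k\in\mathfrak{M}[t]$, avoids any appeal to case (2), and even sharpens the bound to $\deg f+1$.

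The genuine gap is case (2), and it is the heart of the theorem: it is the only place where elements of $\mathfrak{M}[t]$, which Proposition~\ref{unit} cannot touch, actually acquire factorizations into strong atoms. You correctly diagnose that a descent on potentials cannot terminate (potentials are dense in $\mathbb{Q}^{+}$) and that one must instead decorate the two given factors, but you stop at two numerical illustrations and an expectation. The missing construction is short: since $g$ and $h$ together involve only finitely many of the countably many variables, choose two fresh variables $y,z$ occurring in neither. As $y$ and $z$ each have potential $1$, they annihilate $\mathfrak{M}$, and $(yt+z)^2=0$; hence $f=gh=(g+yt+z)(h+yt+z)$ exactly, with no surviving cross terms. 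Each decorated factor is a strong atom because reduction modulo $\mathfrak{N}[t]$, where $\mathfrak{N}$ is generated by all positive rational powers of every variable except $y$ and $z$ (an ideal of nilpotents, so units lift back to $R[t]$), sends $g+yt+z$ to $z+yt$ in the two-variable ring of Proposition~\ref{two}. Note that the decoration must use two fresh variables, each with full exponent $1$: your sketched decorations such as $x^{a}+yt$ do not reduce to the statement of Proposition~\ref{two}, so their strong irreducibility would require a separate (and not obviously true) argument.
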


\begin{proof}
The fact that $R$ is non-atomic is from Proposition \ref{nonatomic}. To verify that $R[t]$ is strongly atomic, it suffices to show that if $f(t)\in R[t]$ is a nonzero nonunit, then one of the three statements holds. As the first statement is immediate from Proposition \ref{unit}, we focus on the last two.

To verify the last two statements, we build in tandem. First suppose that 

\[
f(t)=g(t)h(t)
\]

\noindent with both $g(t), h(t)\in\mathfrak{M}[t]$. Suppose also that $g(t)$ and $h(t)$ are composed of the elements $x_1,x_2,\cdots, x_m$ and let $y$ and $z$ two other elements (homomorphic images of the original indeterminates $\{x_i\}$) that are distinct from the elements composing $g(t)$ and $h(t)$. Since $y, z$ annihilate all of $\mathfrak{M}$, we have the factorization

\[
f(t)=(g(t)+yt+z)(h(t)+yt+z).
\]

It now suffices to show (without loss of generality) that $g(t)+yt+z$ is strongly irreducible. By way of contradiction assume that $g(t)+yt+z=p(t)q(t)$ and consider the ideal $\mathfrak{N}[t]$ where $\mathfrak{N}$ is the ideal generated by all positive rational powers of the elements $x_i$ with the exception of $y$ and $z$. Passing to the homomorphic image $R[t]/\mathfrak{N}[t]\cong(R/\mathfrak{N})[t]$, we obtain the equation

\[
yt+z=\overline{p}(t)\overline{q}(t).
\]

\noindent But Proposition \ref{two} assures us that $yt+z$ is strongly irreducible. Hence, without loss of generality, $\overline{p}(t)$ is a unit in $(R/\mathfrak{N})[t]$ and since $\mathfrak{N}$ is generated by nilpotents, $p(t)$ is a unit in $R[t]$.

For the final case, we assume that $f(t)$ cannot be factored into a product of two elements from $\frak{M}[t]$. If $f(t)$ is strongly irreducible, then we are done. If not, we can assume $f(t)=g(t)h(t)$ with $g(t)\in(\mathfrak{M}, t)$ and $h(t)\in\mathfrak{M}[t]$. Additionally, it must be the case that $g(t)$ has a term with unit coefficient.

Applying Lemma \ref{survive} to the product $g(t)h(t)$, we see that the highest degree term of $g(t)$ that can possibly be a unit coefficient must occur at degree $m\leq\text{deg}(f(t))$. Hence, Proposition \ref{unit} shows that $g(t)$ can be factored into no more than $m\leq\text{deg}(f(t))$ strongly irreducible factors. 

Once we have produced $g(t)$ that has the largest possible maximal degree term with a unit coefficient, we see that $h(t)$ cannot be decomposed with a factor that is not in $\mathfrak{M}[t]$ (lest we would have the ability produce a factor of $f(t)$ with unit term at a higher degree level than the one in $g(t)$) . So if $h(t)$ is not irreducible, we apply the previously proved statement of this theorem to see that $h(t)$ can be decomposed into two strong irreducibles. Putting it all together, $f(t)$ has a factorization into no more than $\text{deg}(f(t))+2$ strong irreducibles.

\end{proof}

It is interesting to point out that the fact that our example is ``full of'' nilpotents should not be too surprising given the following observation (made possible by an interesting observation by the referee).
\begin{lem}
Let $R$ be reduced and $a\in R$ be a nonzero nonunit. If $a=fg$ with $f,g\in R[t]$, and $f$ is a strong atom in $R[t]$, then $g\in R$
\end{lem}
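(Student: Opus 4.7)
The plan is to argue by contradiction: assume $g\notin R$, so that $g$ has leading coefficient $g_n\neq 0$ at some positive degree $n\geq 1$, and then produce a factorization of $f$ that contradicts strong atomicity. The target factorization in $R[t]$ is
\[
f \;=\; (1+g_n t)\cdot f.
\]
If this identity holds, then because $f$ is a strong atom and is itself a nonunit, the other factor $1+g_n t$ must be a unit in $R[t]$. By the classical criterion for units of a polynomial ring, this forces $g_n$ to be nilpotent, and reducedness of $R$ collapses this to $g_n=0$, contradicting the choice of $g_n$.

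The whole game is therefore to establish $g_n\cdot f = 0$ in $R[t]$, since once this holds, $(1+g_n t)\cdot f = f + t(g_n f) = f$ is immediate. Writing $f=\sum_{i=0}^m f_i t^i$ and $g=\sum_{j=0}^n g_j t^j$, the equation $fg=a\in R$ forces $\sum_{i+j=k} f_i g_j=0$ for every $k\geq 1$; in particular the $k=m+n$ relation gives $f_m g_n=0$ at once. I would then use descending induction on $i$: assuming $f_{i+1}g_n=\cdots=f_m g_n=0$, multiply the $k=i+n$ coefficient relation by $g_n$. Every summand indexed by $s>i$ becomes $g_{i+n-s}\cdot(f_s g_n)=0$ by the induction hypothesis, leaving only $f_i g_n^2=0$. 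Then $(f_i g_n)^2 = f_i\cdot(f_i g_n^2)=0$, and reducedness of $R$ yields $f_i g_n=0$, closing the induction.

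This is essentially the well-known fact that reduced rings are Armendariz, applied here to the $g_n$-column of the coefficient relations for $fg=a$. Once $g_n f=0$ is in hand, the contradiction sketched in the first paragraph finishes the proof and gives $g\in R$. The main obstacle I anticipate is the bookkeeping in this Armendariz-style descent; the rest is a short manipulation packaging strong atomicity together with the standard description of units in $R[t]$.
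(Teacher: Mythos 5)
Your proof is correct, and it reaches the paper's key identity $g_nf=0$ by a genuinely different route. The paper gets there globally: for each prime $P$ of $R$ it reduces $a=fg$ modulo $PR[t]$, notes that in the domain $(R/P)[t]$ the leading coefficient $c$ of $g$ must satisfy $cf\in PR[t]$, and then intersects over all primes to conclude $cf$ is nilpotent, hence zero by reducedness. You instead run an elementary Armendariz/McCoy-style descent on the coefficient relations $\sum_{i+j=k}f_ig_j=0$ for $k\geq 1$, multiplying the $k=i+n$ relation by $g_n$ and using reducedness at each step (via $(f_ig_n)^2=f_i\cdot f_ig_n^2=0$) to close the downward induction; your bookkeeping is sound, including the base case $f_mg_n=0$ and the reduction of every term $g_{i+n-s}(f_sg_n)$ with $s>i$ to zero. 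From $g_nf=0$ onward the two arguments coincide: $f=(1+g_nt)f$, strong atomicity forces $1+g_nt$ to be a unit, the unit criterion makes $g_n$ nilpotent, and reducedness gives $g_n=0$, a contradiction. The paper's version is shorter and conceptually slick but quietly invokes the facts that every prime of $R[t]$ contains $(Q\cap R)R[t]$ and that the nilradical is the intersection of the primes; yours is longer but entirely self-contained at the level of coefficients and makes the role of reducedness completely explicit. Both implicitly use the standing convention that a strong atom is a nonunit, which you at least state.
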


\begin{proof}
We assume that $g\notin R$ and let $c$ be the leading coefficient of $g$. If $P$ is any prime ideal of $R$, we consider the reduction to $(R/P)[t]\cong R[t]/PR[t]$. Since $(R/P)[t]$ is a domain, we must conclude that $cf\in PR[t]$. Hence $cf$ is in every prime ideal of $R[t]$ and so must be nilpotent. As $R$ (and hence $R[t]$) is reduced, it must be the case that $cf=0$. So $f=f(1+ct)$. Since $f$ is a strong atom, this means that $1+ct$ is a unit in $R[t]$, but $R$ is reduced, so we conclude that $c=0$. Hence $g\in R$.
\end{proof}

\begin{cor}
If $R$ is a reduced ring and $R[t]$ is stongly atomic, then $R$ is (strongly) atomic.
\end{cor}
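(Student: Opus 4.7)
The plan is to bootstrap the preceding Lemma into a statement about every factor in a strong-atom factorization. Given a nonzero nonunit $a\in R$, use strong atomicity of $R[t]$ to fix a factorization $a=f_1f_2\cdots f_n$ into strong atoms of $R[t]$. The goal is to show each $f_i\in R$; once this is known, verifying that $f_i$ is a strong atom \emph{in $R$} is routine, since any factorization $f_i=bc$ in $R$ is also a factorization in $R[t]$, so the strong-atom property of $f_i$ in $R[t]$ forces one of $b,c$ to be a unit of $R[t]$, and a unit of $R[t]$ that happens to lie in $R$ is a unit of $R$ (compare constant terms in $b\cdot b^{-1}=1$).

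The main work is thus forcing every $f_i$ to be a constant polynomial. For each $S\subsetneq\{1,\dots,n\}$, set $b_S:=\prod_{j\notin S}f_j\in R[t]$. I would prove $b_S\in R$ by induction on $|S|$. The base case is $b_\emptyset=a\in R$. For the inductive step, note first that $b_S\neq 0$ (because $a\neq 0$ factors through $b_S$) and that $b_S$ is a nonunit (a nonempty product of nonunits is a nonunit in any commutative ring with identity), so $b_S$ is a nonzero nonunit of $R$. Picking any $k\notin S$, commutativity lets us write $b_S=f_k\cdot b_{S\cup\{k\}}$ in $R[t]$, and applying the Lemma to the strong atom $f_k$ and the nonzero nonunit $b_S$ yields $b_{S\cup\{k\}}\in R$. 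Taking $|S|=n-1$ produces $f_k\in R$ for each $k$, and combining with the first paragraph exhibits $a=f_1\cdots f_n$ as a product of strong atoms of $R$, proving $R$ is strongly atomic (hence atomic).

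The conceptual obstacle I expect is precisely the asymmetry of the Lemma: a single invocation shows only that the \emph{cofactor} $f_2\cdots f_n$ of the strong atom $f_1$ lies in $R$, not that $f_1$ itself does. The subset-stripping induction above is the cleanest way I see to bridge this gap, but any analogous forward-then-backward sweep along the factorization would also work. Inside the loop, the only verification beyond applying the Lemma is the nonzero/nonunit check for each intermediate $b_S$, and both parts of that check are immediate from $a\neq 0$ and from the elementary fact that a product of nonunits is a nonunit.
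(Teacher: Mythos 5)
Your proof is correct and takes essentially the same route as the paper: the paper's entire argument is the one-line assertion that the Lemma, applied inductively to the factorization $a=f_1f_2\cdots f_n$, shows each $f_i$ lies in $R$ and is a (strong) atom there. Your subset-stripping induction is simply a careful implementation of that same idea, with the asymmetry of the Lemma (it places the \emph{cofactor} of the strong atom in $R$) handled explicitly where the paper leaves it implicit.
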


\begin{proof}
Suppose that $a=f_1f_2\cdots f_n$ with each $f_i\in R[t]$ being strong atoms. By the previous lemma, we can inductively see that each $f_i$ is a (strong) atom of $R$.
\end{proof}

\section*{Acknowledgement}

The authors express gratitude to the referee whose careful reading facilitated an improved version of this paper. Additionally, we thank the referee for the observation that made the last result on strong atomicity possible.

\bibliography{biblio2}{}
\bibliographystyle{plain}

\end{document}